\newtheorem{lemma}{Lemma}
\newtheorem{corollary}[lemma]{Corollary}
\newtheorem{proposition}[lemma]{Proposition}
\theoremstyle{definition}
\newtheorem{definition}[lemma]{Definition}
\newcommand{\Sphere}{\mathbb{S}^3}  
\newcommand{\crcle}{\mathbb{S}^1}  
\newcommand{\nhd}{\mathcal{N}} 
\DeclareMathOperator{\ms}{MS} 
\DeclareMathOperator{\dist}{d} 
\DeclareMathOperator{\lk}{lk} 
\begin{document}

\title{The quasi-isomorphism class of the Kakimizu complex}
\author{Jessica E. Banks}
\date{}
\maketitle
\begin{abstract}
It has been shown that the Kakimizu complex of a knot is quasi-isomorphic to $\mathbb{Z}^n$ for some $n\geq 0$.
We give a lower bound on $n$, matching the upper bound previously given.
\end{abstract}


A \textit{Seifert surface} for a knot $K$ in $\Sphere$ is a compact, connected, orientable surface whose boundary is $K$. We consider Seifert surfaces up to ambient isotopy in the knot exterior $E=\Sphere\setminus\nhd(K)$. The \textit{Kakimizu complex} $\ms(K)$ of $K$ is a simplicial complex that records the structure of the set of minimal genus Seifert surfaces for $K$. The vertices are given by the isotopy classes of minimal genus Seifert surfaces for $K$, and distinct vertices span a simplex if the vertices can be realised disjointly in $E$. The Kakimizu complex of the unknot is a single vertex; we will assume in this paper that $K$ is not the unknot (the result is immediate in this case).

It is known that, given $K$, there is an upper bound on the dimension of any simplex in $\ms(K)$. If $K$ is either a torus knot or hyperbolic then $\ms(K)$ has only finitely many vertices, but if $K$ is a satellite knot then $\ms(K)$ may be infinite (\cite{MR1177053}) and even locally-infinite (\cite{zbMATH05899716}). In addition, Przytycki and Schultens have shown that $\ms(K)$ is contractible (\cite{MR2869183}).

In \cite{zbMATH06369090}, Johnson, Pelayo and Wilson proved that $\ms(K)$ is quasi-Euclidean. That is, there exists $n\in\mathbb{N}\cup\{0\}$ such that $\ms(K)$ is quasi-isomorphic to $\mathbb{Z}^n$. Here the metric on $\ms(K)^1$ is the graph metric where each edge has length $1$. The authors give an upper bound on $n$, and suggest that this is also a lower bound. Our aim is to show that this is indeed a lower bound. To do so, we must recall the key elements of their proof.

\medskip

Consider an incompressible torus $T$ properly embedded in $E$. In $\Sphere$, it must bound a solid torus on one side, and this solid torus necessarily contains $K$. We will describe the solid torus as being `inside' $T$, and the knot-complement component of $E\setminus T$ as being `outside' $T$.

We next need to consider the JSJ decomposition of $E$ (see, for example, \cite{zbMATH05141213}).
Choose a minimal collection $T_1,\ldots,T_N$ of incompressible tori, pairwise disjoint, such that the complement of $\bigcup T_i$ consists of Seifert fibered pieces and atoroidal pieces. Let $E_0,\ldots,E_N$ be the (closures of the) regions of $E\setminus\bigcup T_i$. We may arrange that $E_0$ meets $\nhd(K)$, and, for $1\leq j\leq N$, that $E_j$ has $T_j$ as one of its boundary components with $E_j$ lying outside $T_j$. Following the terminology of \cite{zbMATH06369090}, we will refer to each $E_j$ as a \textit{block}. Set $T_0$ to be the torus $\partial E=\partial\nhd(K)$. Then $T_0$ is also incompressible in $E$ and is
 a boundary component of $E_0$, with $E_0$ outside of $T_0$.

Johnson, Pelayo and Wilson define the \textit{core} of $E$ to be the union of the core blocks, where $E_j$ is a \textit{core block} if every (minimal genus) Seifert surface for $K$ intersects $E_j$. Equivalently, $E_j$ is a core block if $K$ is homologically non-trivial in the solid torus $V_j$ inside $T_j$. Note that the core of $E$ is connected and contains $E_0$. 
The following result shows that each $T_j$ contained in the interior of the core of $K$ has a preferred slope. For each $j$, let $K_j$ be the core of $V_j$, and view $V_j$ as a neighbourhood of $K_j$.

\begin{proposition}[\cite{zbMATH06369090} Proposition 2]\label{slopeprop}
If $E_j$ is a core block then there is a slope $\alpha_j$ on $T_j$ such that, if $R$ is any minimal genus Seifert surface for $K$, every curve of $R\cap T_j$ that is essential in $T_j$ (of which there is at least one) is parallel to $\alpha_j$. Moreover, $\alpha_j$ is the longitude of $V_j$ (that is, $\alpha_j$ is the boundary of a Seifert surface for $K_j$, as an unoriented curve).
\end{proposition}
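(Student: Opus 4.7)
The plan is the following. After putting $R$ in general position with $T_j$, I would use the incompressibility of $T_j$ in $E$ and the minimality of the genus of $R$ to apply a standard innermost-disk isotopy, ensuring every component of $R\cap T_j$ is essential in $T_j$. Since disjoint essential simple closed curves on a torus are pairwise parallel, the components of $R\cap T_j$ all share a common slope $\alpha^R$. The core-block hypothesis gives $[K]\neq 0$ in $H_1(V_j)$, which rules out the possibility $R\cap T_j=\emptyset$: otherwise $R$, being connected with $\partial R\subset\nhd(K)\subset V_j$, would lie entirely in $V_j$, and extending across $\nhd(K)$ would yield $[K]=0$ in $H_1(V_j)$.

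To show $\alpha^R$ is the longitude of $V_j$, I would use two homological equations. Write $\alpha^R=p\mu+q\lambda$, where $\mu,\lambda$ are the meridian and Seifert longitude of $V_j$, and let $S=\sum_i\epsilon_i$ with $\epsilon_i\in\{\pm 1\}$ the sign of the $i$th component of $R\cap T_j$. Extending $R$ to $R^+$ with $\partial R^+=K$, the intersection $R^+\cap V_j$ is a $2$-chain in $V_j$ with boundary $K\cup(R\cap T_j)$ (with appropriate signs). Its class vanishes in $H_1(V_j)=\mathbb{Z}[K_j]$; since $\mu\mapsto 0$ and $\lambda\mapsto[K_j]$, this gives $w+qS=0$ (up to an overall sign), where $w$ is the winding number of $K$ around $K_j$. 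On the other hand, letting $U_j=\Sphere\setminus\mathrm{int}(V_j)$, the surface $R\cap U_j$ is a $2$-chain in $U_j$ with boundary entirely on $T_j$ (as $\partial\nhd(K)\subset V_j$), so its boundary vanishes in $H_1(U_j)=\mathbb{Z}\mu$, giving $pS=0$.

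Combining these constraints: $w\neq 0$ (by the core-block hypothesis) forces $S\neq 0$, and then $pS=0$ forces $p=0$. So $\alpha^R=q\lambda$, and primitivity of $\alpha^R$ gives $q=\pm 1$, i.e., $\alpha^R=\pm\lambda$ as an unoriented slope. Since $\lambda$ depends only on $V_j$ (through $K_j$), the slope is independent of $R$, and we set $\alpha_j:=\lambda$. The existence of at least one essential intersection curve is just $|S|\geq 1$, which follows from $|qS|=|w|\geq 1$.

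The main obstacle is the careful bookkeeping of orientations, particularly the identification of the signed sum $S$ consistently in both the $H_1(V_j)$ and $H_1(U_j)$ equations, noting that the boundary orientations of $R\cap V_j$ and $R\cap U_j$ on $T_j$ are opposite. Modulo this bookkeeping, the proof reduces to a short homological computation; the initial innermost-disk reduction is standard, relying only on the irreducibility of $E$.
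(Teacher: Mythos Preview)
Your argument is correct. Note, however, that the paper does not itself prove this proposition: it is quoted from \cite{zbMATH06369090} (their Proposition~2), so there is no proof in the present paper to compare against. Your approach---the homological computation in $H_1(V_j)$ and $H_1(U_j)$ to pin down the slope as the longitude---is the standard one and is essentially how the result is established in the cited source.

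Two minor remarks. First, the innermost-disk isotopy is not actually needed for the statement as phrased: the proposition only asserts that the \emph{essential} curves of $R\cap T_j$ have slope $\alpha_j$ and that at least one exists, and your homological equations already force $S\neq 0$ (hence at least one essential curve) without any prior cleanup, since inessential curves contribute zero to $[R\cap T_j]\in H_1(T_j)$. Second, your identification of where minimal genus enters is slightly off: it is used only to guarantee that $R$ is incompressible (so that the innermost-disk move is an isotopy), but since that move is unnecessary here, the argument in fact applies to any Seifert surface in general position with $T_j$, minimal genus or not.
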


Although the slope $\alpha_j$ is determined by the knot $K_j$ in $\Sphere$, the number and orientation of the curves $R\cap T_j$ are controlled by the position of the surface $R$. 
 More precisely, $[R\cap T_j]=a_j[\alpha_j]=[K]$ in $H_1(V_j;\mathbb{Z})$ for some $a_j\in\mathbb{Z}$. Here $|a_j|$ is equal to the winding number of $K$ in $V_j$.

\subsection*{A group action}

Johnson, Pelayo and Wilson make use of an action of $\mathbb{Z}^N$ on $\ms(K)$. For a fixed $k$, choose a product neighbourhood $W_k$ of $T_k$ in $E$. Choose a product structure on $W_k$, expressing it as $\crcle\times\crcle\times I$, where the first $\crcle$ corresponds to the slope $\alpha_k$ on $T_k$.
Define $\phi_{T_k}\colon E\to E$ by
\[
\phi_{T_k}(x)=
\begin{cases}
x & x\notin W_k,\\
(z,e^{i(\theta +2\pi t)},t) & x=(z,e^{i\theta})\in W_k.
\end{cases}
\]
Note that if the product neighbourhoods $W_i$ are disjoint then these homeomorphisms of $E$ act independently.

We then define the action $\Phi\colon \mathbb{Z}^N\times\ms(K)\to\ms(K)$ by
\[
\Phi(r_1,\ldots,r_N,R)=\phi_{T_1}^{r_1}\circ\cdots\circ\phi_{T_N}^{r_N}(R).
\]
The moral of \cite{zbMATH06369090} is that all infinite directions in $\ms(K)$ come from `spinning around the tori' using this action. The upper bound on the quasi-dimension of $\ms(K)$ comes from counting the number of different ways of spinning around tori like this. To give a lower bound, we will fix a minimal genus Seifert surface $R_K$ for $K$, and show that acting on $R_K$ gives enough Seifert surfaces that are different (and distant in $\ms(K)$) from each other.

\subsection*{Basepoint} 

Choose a minimal genus Seifert surface $R_K$ for $K$, for use as a reference point. We will now edit $R_K$ to suit our purposes, but continue to denote it by $R_K$. For this we will use the following two results, which, although not explicitly stated, make up the proof of \cite{burde2013knots} Lemma 16.3.

\begin{lemma}
Let $K_S$ be a (satellite) knot, and let $T_S$ be an essential torus in the complement of $K_S$. Let $R_S$ be a minimal genus Seifert surface for $K_S$, in general position with respect to $T_S$. Then it is possible, by surgering along subdics and subannuli of $T_S$ and discarding closed components, to change $R_S$ to a minimal genus Seifert surface $R'_S$ such that all components of $R'_S\cap T_S$ are parallel and oriented in the same direction.
\end{lemma}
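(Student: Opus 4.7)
The plan is to carry out the two allowed surgery types in sequence: first disc surgeries to make every intersection curve essential in $T_S$, then annular surgeries to align all the orientations.

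For the first stage, $R_S\cap T_S$ is a disjoint union of simple closed curves on the torus $T_S$. If some component is inessential in $T_S$, I take an innermost such curve $c$, bounding a disc $D\subset T_S$ whose interior is disjoint from $R_S$. Since $R_S$ has minimal genus and $K_S$ is non-trivial, $R_S$ is incompressible in the exterior of $K_S$, so $c$ also bounds a disc $D'\subset R_S$. The sphere $D\cup D'$ bounds a ball in $\Sphere$ by irreducibility, and surgering $R_S$ along $D$ (then discarding the resulting sphere component if any) yields a minimal genus Seifert surface meeting $T_S$ in strictly fewer curves. Iterating reduces to the situation in which every component of $R_S\cap T_S$ is essential in $T_S$; being disjoint and essential on a torus, they are automatically mutually parallel.

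For the second stage, I orient the parallel intersection curves by the induced orientation from $R_S$. If they do not all agree, then some adjacent pair $c_1,c_2$ cobounding an annulus $A\subset T_S$ with disjoint interior must carry opposite orientations. Cutting $R_S$ along $c_1\cup c_2$ and gluing in two parallel push-offs of $A$ (one to each side of $T_S$) produces a well-defined oriented surface, precisely because the orientations are opposite. Its Euler characteristic agrees with that of $R_S$ and it meets $T_S$ in two fewer curves. Discarding closed components, what remains is still a Seifert surface for $K_S$, and minimality of $R_S$ combined with irreducibility of the knot exterior forces the surviving bounded component to again be of minimal genus. Iterating yields a surface on which every adjacent pair is coherently oriented, hence all intersection curves point the same way.

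The hard part will be the bookkeeping that each surgery genuinely preserves minimal genus while strictly decreasing the intersection count. The disc case uses incompressibility of $R_S$ together with irreducibility of $\Sphere$ to handle any sphere component. The annular case relies on the orientation mismatch to make the surgery well-defined, on an Euler characteristic count to prevent the genus of the surviving bounded component from rising, and on irreducibility of the exterior to dismiss any sphere components that appear. Termination in each stage then follows by induction on the finite number of intersection curves.
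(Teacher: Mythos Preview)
The paper does not supply its own proof of this lemma; it quotes it (together with the next lemma) as constituting the proof of Lemma~16.3 in Burde--Zieschang. Your two-stage strategy---disc surgeries to remove curves inessential in $T_S$, then annulus surgeries to eliminate adjacent oppositely-oriented pairs---is the standard one and is correct in outline.

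There is, however, a genuine gap in the annular step. You assert that ``minimality of $R_S$ combined with irreducibility of the knot exterior forces the surviving bounded component to again be of minimal genus,'' but this reasoning points the wrong way. If a sphere occurred among the discarded closed components, the Euler characteristic of the remaining bounded piece would \emph{drop}, so its genus would \emph{rise}; neither minimality of $R_S$ nor irreducibility prevents that. Irreducibility only says a sphere bounds a ball, not that it cannot appear as a component of your surgered surface, and minimality of $R_S$ bounds the genus of any Seifert surface from below, not above. What actually rules out sphere components is that, after Stage~1, every intersection curve is essential in $R_S$ as well as in $T_S$: if some $c$ bounded a disc in $R_S$, an innermost intersection curve on that disc would give a compressing disc for $T_S$, contradicting its incompressibility. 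Hence every piece of $R_S$ cut along $c_1\cup c_2$ has non-positive Euler characteristic; since the glued-in annuli also have $\chi=0$, no closed component of the surgered surface can have $\chi>0$. It follows that the bounded component has $\chi\ge\chi(R_S)$, and now minimality gives equality. With this correction your argument goes through.
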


\begin{lemma}
Let $K_S$ be a knot in $\Sphere$, and let $R_S$ be a connected, oriented surface properly embedded in the exterior of $K_S$, such that all boundary components of $R_S$ are longitudes of $K_S$ oriented in the same direction.
Then $R_S$ has at most one boundary component.
\end{lemma}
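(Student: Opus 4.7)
The plan is to argue by contradiction, assuming $R_S$ has $n \geq 2$ boundary components and deriving a contradiction via an algebraic intersection count. Write $E$ for the exterior $\Sphere \setminus \nhd(K_S)$ and fix a Seifert surface $\Sigma$ for $K_S$. The group $H_2(E,\partial E) \cong \mathbb{Z}$ is generated by $[\Sigma]$, with boundary map sending $[\Sigma]$ to the longitude class $[\lambda] \in H_1(\partial E)$. Since $\partial [R_S] = n[\lambda]$ by the same-direction hypothesis, it follows that $[R_S] = n[\Sigma]$.

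Let $R_S^+$ be a small parallel push-off of $R_S$, so $R_S \cap R_S^+ = \emptyset$ and $\partial R_S^+ = L_1^+ \cup \cdots \cup L_n^+$ is a parallel family of pushed-off longitudes on $\partial E$. Because $R_S$ is globally oriented and each $L_i$ inherits the same boundary orientation, the push-off displaces every $L_i$ by the same small vector in $\pm\hat\mu$ along $\partial E$; in particular the push-off direction is uniform across all boundary components. Choose two boundary longitudes $L_1, L_2$ that are adjacent on $\partial E$ in the meridional cyclic order, which is possible for $n \geq 2$, and let $A \subset \partial E$ be the short annulus they cobound. By connectedness of $R_S$, choose an arc $\gamma \subset R_S$ from $p_1 \in L_1$ to $p_2 \in L_2$, and let $\delta \subset A$ be the short meridional arc from $p_2$ back to $p_1$. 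Form the loop $c = \gamma \cdot \delta$ in $E$.

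Now compute $c \cdot R_S^+$ in two ways. Geometrically, $\gamma \cap R_S^+ = \emptyset$ because $\gamma \subset R_S$ and $R_S \cap R_S^+ = \emptyset$, while $\delta \cap R_S^+ = \delta \cap \bigcup_i L_i^+$. Uniformity of the push-off together with the adjacency of $L_1, L_2$ forces exactly one of $L_1^+, L_2^+$ to lie in $A$, with no other $L_k^+$ meeting $A$, so $\delta$ crosses this single pushed-off longitude once transversely; hence $c \cdot R_S^+ = \pm 1$. On the other hand, under the Poincar\'e--Lefschetz intersection pairing $H_1(E) \otimes H_2(E,\partial E) \to \mathbb{Z}$, we have $c \cdot R_S^+ = \langle [c], [R_S^+]\rangle = n \langle [c], [\Sigma]\rangle = \pm n[c]$, where $[c] \in H_1(E) \cong \mathbb{Z}$ is the linking number of $c$ with $K_S$. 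Combining the two computations gives $n[c] = \pm 1$, so $n \mid 1$, contradicting $n \geq 2$.

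The main obstacle is verifying the two geometric inputs to the intersection count: the uniformity of the push-off direction along $\partial E$ (a consequence of $R_S$ being globally oriented with matching boundary orientations, traceable via the positive normal to $R_S$ in the oriented ambient $\Sphere$), and the claim that $\delta$ meets exactly one pushed-off boundary longitude (which follows from $L_1, L_2$ being adjacent and the push-off being in one consistent meridional direction). Both reduce to routine local coordinate bookkeeping on the boundary torus.
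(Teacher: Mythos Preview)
The paper does not supply its own proof of this lemma; it is quoted as one of two facts extracted from the proof of Lemma~16.3 in Burde--Zieschang, so there is no in-paper argument to compare against.

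Your argument is correct and is essentially the classical one. The homological identity $[R_S]=n[\Sigma]$ in $H_2(E,\partial E)$ (which follows, as you note, from injectivity of the boundary map $H_2(E,\partial E)\to H_1(\partial E)$, since $H_2(E)=0$) forces every algebraic intersection number with a push-off of $R_S$ to be divisible by $n$, while the loop $c=\gamma\cdot\delta$ built from an arc on $R_S$ and a short meridional arc between adjacent boundary longitudes has intersection $\pm 1$ with $R_S^+$. The two technical points you isolate---that the push-off moves all boundary longitudes in the same meridional direction (because the positive normal to the globally oriented $R_S$ restricts to a consistent transverse direction on $\partial E$), and that the arc $\delta$ therefore meets exactly one $L_i^+$---are genuine but, as you say, reduce to orientation bookkeeping on the boundary torus. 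One small point worth making explicit: to interpret $c\cdot R_S^+$ via the pairing $H_1(E)\otimes H_2(E,\partial E)\to\mathbb{Z}$ you should push $\delta$ slightly into the interior of $E$, after which the single crossing of $\delta$ with a curve of $\partial R_S^+$ on $\partial E$ becomes a single transverse intersection of $c$ with $R_S^+$ in the interior; this does not affect the count.
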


Choose a JSJ torus $T_k$. We can edit $R_K$ so that all components of $R_K\cap T_k$ are parallel and oriented in the same direction. If $T_k$ does not lie in the interior of the core then $R_K$ is disjoint from $T_k$.
On the other hand, if $T_k$ lies in the interior of the core then
these curves are parallel to $\alpha_k$ and there are $|a_k|$ of them. Then $R_K\setminus V_k$ is formed of $|a_k|$ minimal genus Seifert surfaces for $K_k$. We may then replace these with $|a_k|$ parallel copies of a single one of those components. Note that these changes all take place in $W_k$ or outside $T_k$, without affecting anything further inside.

By working in this way inductively outwards from $K$, we achieve the following result.

\begin{lemma}
We may choose $R_K$ such that, for each $j$, all curves of $R_K\cap T_j$ are parallel to $\alpha_j$ and oriented the same way, and all components of $R_K\cap E_j$ are parallel to each other with a single boundary component each on $T_j$.
\end{lemma}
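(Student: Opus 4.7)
The plan is to prove this by induction on the JSJ tori, processed in an order from innermost to outermost. Order $T_1,\ldots,T_N$ so that if $V_k\subsetneq V_{k'}$ then $k<k'$; such an order exists because the solid tori $V_j$ are nested according to the tree structure of the JSJ decomposition rooted at $T_0=\partial\nhd(K)$. The procedure at each step is exactly the one sketched in the paragraph preceding the statement.

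For the inductive step, suppose $R_K$ already has the desired structure at $T_1,\ldots,T_{k-1}$. Put $R_K$ in general position with respect to $T_k$ and apply the first preceding lemma: by surgering along subdiscs and subannuli of $T_k$ and discarding closed components, we can arrange that every curve of the new $R_K\cap T_k$ is parallel and co-oriented. By Proposition~\ref{slopeprop} and the subsequent homology remark, either $T_k$ is non-core and $R_K\cap T_k=\emptyset$, or $T_k$ is core and $R_K\cap T_k$ consists of exactly $|a_k|$ curves parallel to $\alpha_k$.

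In the core case, each component of $R_K\setminus V_k$ is a connected properly embedded orientable surface in the exterior of $K_k$ whose boundary curves lie on $T_k$ and are longitudes of $V_k$ oriented the same way. Applying the second preceding lemma to each component shows it has at most one boundary curve, and connectedness of $R_K$ forces at least one such curve on each component; hence there are exactly $|a_k|$ components, each a Seifert surface for $K_k$. If any of them failed to have minimal genus among Seifert surfaces for $K_k$, we could substitute a minimal genus alternative (leaving the interior of $V_k$ untouched) to obtain a strictly smaller Seifert surface for $K$, contradicting minimality of $R_K$. Since all $|a_k|$ outer components therefore share a single minimal genus, we can replace them by $|a_k|$ parallel copies of any one of them, taken in a product neighbourhood of that copy and glued along the matching longitudes on $T_k$.

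The modifications at step $k$ take place only in a thin product neighbourhood $W_k$ of $T_k$ and in $E\setminus V_k$, so they touch neither $V_k\setminus W_k$ nor any torus $T_j$ with $j<k$ (all of which lie inside $V_k$). Hence the parallel structure established inductively at $T_1,\ldots,T_{k-1}$ is preserved, and the induction carries through. The one organisational matter, easily arranged in advance, is to fix pairwise disjoint product neighbourhoods $W_k$, each disjoint from the other $T_j$; given this, the substantive content is simply the appeal to the two Burde--Zieschang lemmas already flagged in the text, with the genuine content sitting in the minimal genus argument above.
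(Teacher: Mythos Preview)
Your argument is correct and follows essentially the same approach as the paper: process the JSJ tori inductively from the inside out, applying the two Burde--Zieschang lemmas at each stage and then replacing $R_K\setminus V_k$ by parallel copies of one of its components. Your write-up is in fact more detailed than the paper's sketch---you spell out the minimal-genus argument and verify explicitly that the step at $T_k$ does not disturb what was achieved at earlier tori---so nothing is missing. One small point worth noting: your parenthetical that all $T_j$ with $j<k$ lie inside $V_k$ is true, but not simply because of the chosen ordering (two $V_j$'s can be incomparable, as for a connected sum); it follows from the general fact that for disjoint incompressible tori each bounding a solid torus containing $K$, one boundary torus always lies inside the other solid torus.
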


Some readers may find it helpful to picture the surface $R_K$ we have just constructed in terms of branched surfaces. We will not explicitly use this viewpoint in this paper.

\subsection*{Fibred blocks}

The upper bound on the dimension of $\ms(K)$ given in \cite{zbMATH06369090} depends on the number of core blocks that are fibred. 
For each core block $E_j$, we can ask whether a connected component $R_j$ of $R_K\cap E_j$ is a fibre for $E_j$ (that is, whether the complement of $R_j$ in $E_j$ is $R_j\times I$). Note that the answer to this question is determined only by the curves $R_K\cap\bigcup T_i$ (which depend only on $K$), and is not dependent on the specific choice of surface $R_K$. It is possible that a block might be fibred with a different `boundary pattern', but we are not interested in such cases in this paper. The upper bound in \cite{zbMATH06369090} is one less that the number of core blocks that are not fibred. Denote this number by $N'$. 
Our aim is to construct a quasi-isometric embedding of $\mathbb{Z}^{N'}$ into $\ms(K)$.

The intuitive explanation for this value is that we can spin $R_K$ around each torus it intersects, but spinning around $T_0$ can be reversed by isotopy, and if $E_j$ is fibred then spinning around $T_j$ has the same effect, up to isotopy, as spinning around each of the other boundary components of $E_j$.

For the purposes of our proof, we will need to forget about some of the tori $T_0,\ldots,T_N$, according to which ones we will use for spinning around. For convenience, we will re-label the objects we are considering.
Starting with the list $T_0,\ldots,T_N$, remove each $T_j$ such that $E_j$ is not a core block. Also remove $T_0$. If $E_j$ is fibred for $j\geq 1$ then remove $T_j$. Finally, if $E_0$ is fibred then remove one remaining torus that is now `innermost', in the sense that it is not separated from $K$ by any of the other remaining tori.
Re-label the remaining list of tori as $T'_1,\ldots,T'_{N'}$ and their neighbourhoods as $W'_1,\ldots,W'_{N'}$. For convenience, write $W'=\bigcup W'_i$.
Also label the regions of $E\setminus\bigcup T'_i$ as $E'_0,\ldots,E'_{N'}$. As before we may arrange that $T'_j$ is a boundary component of $E'_j$, with $E'_j$ lying outside $T'_j$. The advantage of our new notation is that $T'_j\cap R_K\neq\emptyset$ for each $j$, and no $E'_j$ is fibred.

We are now ready to define our quasi-isometric embedding using the group action $\Phi$.
Note that, in defining each $\Phi_j$, we had some choice in the product structure on $W_j$. 
For notational convenience, we will assume that the product structure on each $W'_j$ has been chosen such that, moving from the inside of $W'_j$ to outside, $\phi_{T'_j}$ twists in the direction given by the orientation on the Seifert surface $R_K$. Figure \ref{pic1} illustrates this convention for the torus $T'_1$.
\begin{figure}[htbp]
\centering
\psset{xunit=.5pt,yunit=.5pt,runit=.5pt}
\begin{pspicture}(640,350)
{
\pscustom[linestyle=none,fillstyle=solid,fillcolor=gray]
{
\newpath
\moveto(260,209.99999738)
\curveto(260,143.7258274)(206.27416998,89.99999738)(140,89.99999738)
\curveto(73.72583002,89.99999738)(20,143.7258274)(20,209.99999738)
\curveto(20,276.27416736)(73.72583002,329.99999738)(140,329.99999738)
\curveto(206.27416998,329.99999738)(260,276.27416736)(260,209.99999738)
\closepath
\moveto(620,209.99999477)
\curveto(620,143.72582479)(566.27416998,89.99999477)(500,89.99999477)
\curveto(433.72583002,89.99999477)(380,143.72582479)(380,209.99999477)
\curveto(380,276.27416475)(433.72583002,329.99999477)(500,329.99999477)
\curveto(566.27416998,329.99999477)(620,276.27416475)(620,209.99999477)
\closepath
}
}
{
\pscustom[linestyle=none,fillstyle=solid,fillcolor=white]
{
\newpath
\moveto(180,209.99999738)
\curveto(180,187.90860739)(162.09138999,169.99999738)(140,169.99999738)
\curveto(117.90861001,169.99999738)(100,187.90860739)(100,209.99999738)
\curveto(100,232.09138738)(117.90861001,249.99999738)(140,249.99999738)
\curveto(162.09138999,249.99999738)(180,232.09138738)(180,209.99999738)
\closepath
\moveto(540,209.99999477)
\curveto(540,187.90860477)(522.09138999,169.99999477)(500,169.99999477)
\curveto(477.90861001,169.99999477)(460,187.90860477)(460,209.99999477)
\curveto(460,232.09138476)(477.90861001,249.99999477)(500,249.99999477)
\curveto(522.09138999,249.99999477)(540,232.09138476)(540,209.99999477)
\closepath
}
}
{
\pscustom[linewidth=1,linecolor=black]
{
\newpath
\moveto(220,209.99999738)
\curveto(220,165.8172174)(184.18277999,129.99999738)(140,129.99999738)
\curveto(95.81722001,129.99999738)(60,165.8172174)(60,209.99999738)
\curveto(60,254.18277737)(95.81722001,289.99999738)(140,289.99999738)
\curveto(184.18277999,289.99999738)(220,254.18277737)(220,209.99999738)
\closepath
\moveto(580,209.99999477)
\curveto(580,165.81721478)(544.18277999,129.99999477)(500,129.99999477)
\curveto(455.81722001,129.99999477)(420,165.81721478)(420,209.99999477)
\curveto(420,254.18277475)(455.81722001,289.99999477)(500,289.99999477)
\curveto(544.18277999,289.99999477)(580,254.18277475)(580,209.99999477)
\closepath
}
}
{
\pscustom[linewidth=3,linecolor=black]
{
\newpath
\moveto(140,20)
\lineto(140,210)
\moveto(500,210)
\lineto(499.95879606,170.02546779)
\curveto(470.68611524,167.60219056)(447.88889432,193.72891971)(446.68601544,221.5625415)
\curveto(444.98971321,260.81357136)(480.19706247,290.80271289)(517.8086043,291.37707252)
\curveto(568.20201186,292.1466219)(606.22160889,246.70501303)(605.95357704,198.13561245)
\curveto(605.63369543,140.17064155)(556.38287607,94.49214281)(500.08059909,90.26016773)
\lineto(500,20)
}
}
{
\pscustom[linewidth=2,linecolor=black]
{
\newpath
\moveto(290,210)
\lineto(350,210)
\moveto(140,50)
\lineto(160,50)
\moveto(500,50)
\lineto(520,50)
\moveto(140,200)
\lineto(160,200)
\moveto(500,200)
\lineto(520,200)
}
}
{
\pscustom[linewidth=1,linecolor=black,fillstyle=solid,fillcolor=black]
{
\newpath
\moveto(330,210)
\lineto(322,202)
\lineto(350,210)
\lineto(322,218)
\lineto(330,210)
\closepath
\moveto(150,50)
\lineto(146,46)
\lineto(160,50)
\lineto(146,54)
\lineto(150,50)
\closepath
\moveto(510,50)
\lineto(506,46)
\lineto(520,50)
\lineto(506,54)
\lineto(510,50)
\closepath
\moveto(150,200)
\lineto(146,196)
\lineto(160,200)
\lineto(146,204)
\lineto(150,200)
\closepath
\moveto(510,200)
\lineto(506,196)
\lineto(520,200)
\lineto(506,204)
\lineto(510,200)
\closepath
}
}
{
\put(50,80){$E'_0$}
\put(110,210){$E'_1$}
\put(290,230){$\phi_{T'_1}$}
}
\end{pspicture}
\caption{\label{pic1}}
\end{figure}

We define $\Theta\colon\mathbb{Z}^{N'}\to\ms(K)$ by
\[
\Theta(r_1,\ldots,r_{N'})=(\phi_{T'_1})^{5r_1}\circ\cdots\circ(\phi_{T'_{N'}})^{5r_{N'}}(R_K).
\]
That is, up to re-labelling, $\Theta$ is the restriction of $\Phi^5$ to the coordinates corresponding to the tori $T'_1,\ldots,T'_{N'}$ and the surface $R_K$. The use of the power $5$ here is not significant; its purpose is to remove the need to consider `small cases' later.

\subsection*{Distances}

To show that $\Theta$ is a quasi-isometric embedding, we need to calculate distances in $\ms(K)$. The distance $\dist_{\ms(K)}$ between two vertices in $\ms(K)$ is defined using the graph metric where each edge has length $1$. In \cite{MR1177053}, Kakimizu gave a method for calculating the distance using the infinite cyclic cover of $E$ corresponding to the kernel of the linking number $\lk\colon\pi_1(E)\to\mathbb{Z}$.

Choose a minimal genus Seifert surface $R$ for $K$. We can build the infinite cyclic cover $\widetilde{E}$ of $E$ as follows. Let $E_R$ be $E$ cut along the surface $R$. Then the boundary of $E$ is divided into three parts: two copies of $R$, which can be distinguished using the orientation of $R$, and an annulus that is the torus $\partial E$ cut along the simple closed curve $\partial R$. To form $\widetilde{E}$, stack countably many copies of $E_R$ by gluing the positive side of $R$ in the $n$th copy of $\partial E_R$ to the negative side of $R$ in the $(n+1)$th copy. The quotient map is given by mapping each copy of $E_R$ to $E_R$ by the identity, then taking the quotient map from $E_R$ to $E$. The covering transformation is given by translating along the line of copies of $E_R$.

Now choose a second minimal genus Seifert surface $R'$ for $K$ that is not isotopic to $R$. We can calculate the distance between vertices $R$ and $R'$ in $\ms(K)$ as follows.
Choose a lift $\widetilde{R'}$ of $R'$ to $\widetilde{E}$. Isotope $\widetilde{R'}$ within $\widetilde{E}$ to minimise the number, $d$, of copies of $E_R$ that it intersects. Then $\dist_{\ms(K)}(R,R')=d$. Note that $d=1$ if and only if $\widetilde{R'}$ can be isotoped to be disjoint from all lifts of $R$ in $\widetilde{E}$, which is as we would expect given the definition of adjacency in $\ms(K)$.

The difficult part of using this criterion is establishing when $\widetilde{R'}$ has been suitably positioned. The following result, which has its roots in work of Waldhausen, enables us to verify this by only considering the position of the surface $R'$ relative to $R$ within $E$. This version is restricted to the case of knots in $\Sphere$ (the original was for use in more general manifolds).

\begin{definition}
Let $S$ be a compact, connected, orientable surface, and let $\rho$ be a finite (possibly empty, possibly disconnected) submanifold of $\partial S$. Let $M_S$ be the manifold given by taking $S\times I$ and identifying $\{x\}\times I$ to a point for each $x\in\rho$. We call any manifold of this form a \textit{product region}.

We say that the surfaces $R$ and $R'$ \textit{bound a product region} if there exists a product region $M_S$ of this form properly embedded in (the closure of) $E\setminus(R\cup R')$ such that $M_S\cap R=S\times\{0\}$ and $M_S\cap R'=S\times\{1\}$.
\end{definition}

This definition should be viewed as the three-dimensional analogue of when two arcs or curves in a surface `bound a bigon'.

\begin{proposition}[\cite{MR2869183} Proposition 3.2]\label{prop:productregions}
If $R$ and $R'$ intersect transversely and do not bound a product region then $R$ and $R'$ realise $\dist_{\ms(K)}(R,R')$.
\end{proposition}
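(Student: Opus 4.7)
My plan is to prove the contrapositive: if $R$ and $R'$ meet transversely but their intersection is not distance-realising in $\ms(K)$, I will produce a product region between them. Set up the infinite cyclic cover $\widetilde{E}\to E$ with deck generator $\tau$, let $\{\tau^n\widetilde{R}\}_{n\in\mathbb{Z}}$ be the disjoint parallel lifts of $R$, and fix a lift $\widetilde{R'}$ of $R'$. By hypothesis the number of fundamental domains currently met by $\widetilde{R'}$ strictly exceeds $\dist_{\ms(K)}(R,R')$, so there is an isotopy of $R'$ in $E$ lifting equivariantly to an isotopy of $\widetilde{R'}$ to some $\widetilde{R''}$ that spans a strictly shorter range of indices. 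Writing $b$ for the largest index $n$ with $\tau^n\widetilde{R}\cap\widetilde{R'}\ne\emptyset$, after possibly reversing the direction of $\tau$ we may assume $\widetilde{R''}\cap\tau^b\widetilde{R}=\emptyset$.

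Before analysing the sweep-out, I would normalise the intersection $R\cap R'$. Since $R$ and $R'$ are incompressible and $\partial$-incompressible in the irreducible manifold $E$, standard innermost-disc and outermost-arc surgeries let me replace $R'$ by an isotopic surface (without increasing $|R\cap R'|$) so that no component of $R\cap R'$ is inessential in either surface. Lifting this normalisation equivariantly preserves the property that some isotopy pushes $\widetilde{R'}$ off $\tau^b\widetilde{R}$. The trace of that pushing-off isotopy is a cobordism $P\subset\widetilde{E}$ whose frontier is the union of a subsurface $S\subseteq\tau^b\widetilde{R}$, a parallel subsurface $S'\subseteq\widetilde{R'}$ meeting $S$ exactly along $\widetilde{R'}\cap\tau^b\widetilde{R}$, and some pieces of $\partial\widetilde{E}$.

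The heart of the argument is then to identify $P$ as a product region of the form $M_S$ in the sense of the preceding definition. This is a Waldhausen-style cut-and-paste: using the irreducibility of $\widetilde{E}$ together with the normalised intersection pattern, each essential intersection circle cobounds an annulus in $P$, each essential intersection arc cobounds a rectangle or collapsed disc along $\partial\widetilde{E}$, and these pieces assemble to show that $P$ has the form $S\times I$ with some boundary arcs on $\partial\widetilde{E}$ collapsed to points along a subset $\rho\subseteq\partial S$. Because $P$ lies in the slab between $\tau^{b-1}\widetilde{R}$ and $\tau^b\widetilde{R}$, and this slab projects injectively to $E$, the image of $P$ is the required product region between $R$ and $R'$.

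The main obstacle is exactly this last identification step. One must handle the mixed boundary pattern (interior intersection curves together with arcs on $\partial E=T_0$), rule out twisted $I$-bundle pieces using orientability of the surfaces, and verify that after Haken-style normalisation no non-product component of the sweep-out cobordism can survive. This is the standard but delicate content of Waldhausen's and Hatcher's cut-and-paste lemmas in the bordered setting, and it is where the proof of \cite{MR2869183} really concentrates its effort.
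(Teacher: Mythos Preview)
The paper does not contain a proof of this proposition; it is quoted from \cite{MR2869183} (Proposition~3.2 there) and used as a black box in the argument that follows. So there is no ``paper's own proof'' to compare against.

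On the substance of your sketch: the overall contrapositive strategy and the passage to the infinite cyclic cover are correct and match how Przytycki--Schultens argue. Two points deserve tightening. First, your normalisation step isotopes $R'$ before looking for the product region, but the conclusion you need concerns the \emph{original} $R'$; the fix is to note that an innermost inessential circle or outermost inessential arc of $R\cap R'$ already bounds a product region (a ball or half-ball) between $R$ and $R'$, so either you are done immediately or the intersection was essential to begin with. Second, ``the trace of the pushing-off isotopy is a cobordism $P$'' is not how the product region is actually obtained: an ambient isotopy does not directly cut out an embedded submanifold with the stated frontier. The argument in \cite{MR2869183} (following Waldhausen) instead observes that since $\widetilde{R'}$ and $\tau^b\widetilde{R}$ are incompressible, $\partial$-incompressible, and can be isotoped apart in the irreducible manifold $\widetilde{E}$, some complementary component adjacent to $\tau^b\widetilde{R}$ is already an $I$-bundle of the required form; orientability rules out the twisted case. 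That product region then sits in a single fundamental domain and projects embeddedly to $E$. You correctly flag this identification as the crux, but the mechanism is Waldhausen's parallel-surface lemma applied to the existing complementary pieces, not a sweep-out of the isotopy.
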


In other words, if we can arrange that $R$ and $R'$ are transverse and $E\setminus(R\cup R')$ does not include any product regions, then we can count the distance between $R$ and $R'$ without needing to consider any further isotopy of $R'$ (or equivalently of $\widetilde{R'}$). This is the technique we will use to verify that the images of points under $\Theta$ are suitably far apart in $\ms(K)$.

Note that if $M_S$ is a product region between $R$ and $R'$, the intersection $M_S\cap R$ is a connected, orientable surface. Thus if a component of $E\setminus(R\cup R')$ 
meets $R$ on both the positive and the negative sides then this component is not a product region between $R$ and $R'$.

\subsection*{Proof}

\begin{proposition}
The map $\Theta$ is a quasi-isometric embedding of $\mathbb{Z}^{N'}$ into $\ms(K)$.
\end{proposition}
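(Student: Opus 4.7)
Each $\phi_{T'_j}$ is a self-homeomorphism of $E$ and hence acts on $\ms(K)$ as a simplicial automorphism, in particular as an isometry of the graph metric. It follows that $\dist_{\ms(K)}(\Theta(v),\Theta(w))=\dist_{\ms(K)}(R_K,\Theta(v-w))$. Writing $C_j:=\dist_{\ms(K)}(R_K,(\phi_{T'_j})^{5}(R_K))<\infty$ and applying the triangle inequality along coordinate paths gives $\dist_{\ms(K)}(R_K,\Theta(v))\leq \sum_j |v_j|\,C_j$, a Lipschitz bound in $\|v\|_1$.

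\textbf{Lower bound: setup.} The plan is to place $R_K$ and $\Theta(v)$ in transverse position with no product region between them, invoke Proposition~\ref{prop:productregions}, and then count in the Kakimizu cover. Outside $W'$ the two surfaces coincide; inside each $W'_j$, the surface $\Theta(v)\cap W'_j$ is the $5r_j$-fold twist along $\alpha_j$ of the annular pieces of $R_K\cap W'_j$ provided by the preceding construction. A small perturbation inside $W'$ makes the two surfaces transverse, and then $R_K\cap\Theta(v)\subset\bigcup_j W'_j$ is an explicit family of arcs and circles parallel to $\alpha_j$ inside each $W'_j$.

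\textbf{Excluding product regions.} By the remark closing the excerpt, any complementary region of $R_K\cup\Theta(v)$ that meets $R_K$ on both of its sides fails to be a product region. Since $R_K=\Theta(v)$ outside $W'$, every complementary component extending outside $W'$ is of this form, leaving only components sitting inside some single $W'_j$ as candidates. A product region inside $W'_j$ would exhibit a component of $R_K\cap E'_j$ as a fibre of an $I$-bundle structure which, because of the twisting along $\alpha_j$, propagates into a fibration of the entire block $E'_j$; the factor of $5$ in the definition of $\Theta$ guarantees enough twisting to force this extension and rules out the degenerate small-twist cases. This contradicts the arrangement that each $E'_j$ is non-fibered, which was made precisely by removing fibered core blocks when passing from the $T_j$ to the $T'_j$.

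\textbf{Counting and the main obstacle.} Having ruled out product regions, Proposition~\ref{prop:productregions} identifies $\dist_{\ms(K)}(R_K,\Theta(v))$ with the number of copies of $E_{R_K}$ met by a chosen lift $\widetilde{\Theta(v)}\subset\widetilde E$. The twist inside $W'_j$ forces this lift to span on the order of $|r_j|$ copies of $E_{R_K}$ in the portion of $\widetilde E$ lying above $W'_j$, and because the $W'_j$ are pairwise disjoint their contributions to the cover combine independently, yielding $\dist_{\ms(K)}(R_K,\Theta(v))\geq c\max_j|r_j|-c'$. Since $\|v\|_1$ and $\max_j|v_j|$ are equivalent norms on $\mathbb{R}^{N'}$, this matches the upper bound up to multiplicative and additive constants, giving the quasi-isometric embedding. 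The principal obstacle is the product-region exclusion: executing it carefully inside each $W'_j$ is the step where the non-fibered hypothesis on $E'_j$, together with the factor of $5$ and the chosen product structure on each $W'_j$, are all essential.
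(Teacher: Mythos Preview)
Your upper bound via the isometric action and triangle inequality is fine (and in fact tidier than the paper's explicit intersection count). The lower bound, however, has a genuine gap in both the transversality setup and the product-region analysis.

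First, you say that outside $W'$ the two surfaces coincide, and then perturb only inside $W'$. This does not make $R_K$ and $\Theta(v)$ transverse: they still agree as sets on $E\setminus W'$ and on $\partial W'$, so Proposition~\ref{prop:productregions} cannot be invoked. The paper deals with this by isotoping $S=\Theta(v)$ in each piece $E'_j\setminus W'$ as well: it assigns to every boundary component of $S\cap(E'_j\setminus W')$ a sign $\pm$ determined by the sign of the relevant $s_i$, and then pushes $S$ off $R_K$ there (to be disjoint if all signs agree, or to meet $R_K$ in a single separating curve if both signs occur). Only after this is done are the two surfaces honestly transverse and amenable to the product-region criterion.

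Second, you have the role of the non-fibred hypothesis inverted. Complementary regions contained entirely in some $W'_j$ are \emph{not} where the fibredness enters: those regions are ruled out because the spiralling annuli of $S$ meet each such region on both the positive and the negative side. The non-fibred hypothesis is needed instead for a complementary region that extends into $E'_j\setminus W'$ in the case where $S\cap E'_j$ and $R_K\cap E'_j$ have been made disjoint and parallel there; such a region has exactly the form $(R_K\cap E'_j)\times I$ if and only if $R_K\cap E'_j$ is a fibre of $E'_j$, and this is precisely what was excluded when passing from the $T_i$ to the $T'_j$. Your proposed mechanism---a product region inside $W'_j$ ``propagating'' to a fibration of $E'_j$---does not reflect what actually happens, and the factor of~$5$ is not what rules out product regions (it is there only so that the intersection count $5|s_k|-1$ is comfortably bounded below).

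In short, the missing idea is the careful isotopy of $S$ in each $E'_j\setminus W'$ governed by the signs of the $s_i$, followed by a case analysis of complementary regions according to how they meet $\partial W'$; the non-fibred condition enters in the last of these cases, not inside $W'$.
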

\begin{proof}
Let $(r_1,\ldots,r_{N'}),(s_1,\ldots,s_{N'})\in\mathbb{Z}^{N'}$.
Using the action $\Phi$, we may assume without loss of generality that $(r_1,\ldots,r_{N'})=(0,\ldots,0)$. With this assumption, $\Theta(r_1,\ldots,r_{N'})=R_K$. We may also assume that $(s_1,\ldots,s_{N'})\neq(0,\ldots,0)$, which implies that $\max(|s_1|,\ldots,|s_{N'}|)>0$. 

Denote by $S$ a copy of $\Theta(s_1,\ldots,s_{N'})$. We will position $S$ carefully with respect to $R_K$, show that there are no product regions bounded by $R_K$ and $S$, and read off a lower bound on $\dist_{\ms(L)}(R_K,S)$.
If there is a value of $k$ such that $s_k=0$ then the torus $T'_k$ plays no part in this process. We should therefore forget about $T'_k$, as we have already forgotten about some of the other $T_i$. Rather than re-labelling the tori and complementary regions again, we will instead assume that $s_k\neq 0$ for each $k$. This does not impact on the method of proof; it is simply for notational convenience.

We can think of the surface $S$ as being divided into different pieces. In $E'_j\setminus W'$, $R_K$ and $S$ coincide, and are made up of $|a_j|$ parallel copies of the same connected surface. Meanwhile, each component of $S\cap W'_j$ is an annulus that winds $|s_j|$ times around $T_j$ relative to $R_K$.
We will re-position $S$ by considering these pieces separately.

First consider $W'_k$ for some $k$. Each of $R_K\cap W'_k$ and $S\cap W'_k$ consists of parallel annuli properly embedded in $W'_k$. Picture the case where $R_K\cap W'_k$ is a single annulus $A_K$ and $S\cap W'_k$ is a single annulus $A_S$. Note that initially $\partial A_K=\partial A_S$. Because $A_S$ winds around $W'_k$ at least once relative to $A_K$, there is a well-defined choice of direction to isotope each boundary component of $A_S$ within a neighbourhood of $\partial A_K$ to make $\partial A_S$ and $\partial A_K$ disjoint without otherwise affecting $A_K\cap A_S$ (see Figure \ref{pic2}a).
\begin{figure}[htbp]
\centering
(a)
\psset{xunit=.5pt,yunit=.5pt,runit=.5pt}
\begin{pspicture}(640,280)
{
\pscustom[linestyle=none,fillstyle=solid,fillcolor=gray]
{
\newpath
\moveto(260,139.99999738)
\curveto(260,73.7258274)(206.27416998,19.99999738)(140,19.99999738)
\curveto(73.72583002,19.99999738)(20,73.7258274)(20,139.99999738)
\curveto(20,206.27416736)(73.72583002,259.99999738)(140,259.99999738)
\curveto(206.27416998,259.99999738)(260,206.27416736)(260,139.99999738)
\closepath
\moveto(620,139.99999477)
\curveto(620,73.72582479)(566.27416998,19.99999477)(500,19.99999477)
\curveto(433.72583002,19.99999477)(380,73.72582479)(380,139.99999477)
\curveto(380,206.27416475)(433.72583002,259.99999477)(500,259.99999477)
\curveto(566.27416998,259.99999477)(620,206.27416475)(620,139.99999477)
\closepath
}
}
{
\pscustom[linewidth=6,linecolor=black]
{
\newpath
\moveto(140,20)
\lineto(140,100)
\moveto(500,20)
\lineto(500,100)
}
}
{
\pscustom[linewidth=3,linecolor=black]
{
\newpath
\moveto(489.57888277,101.72394496)
\curveto(461.73196707,106.23586212)(446.86197761,136.06919021)(452.20065017,162.16160768)
\curveto(459.96577269,200.11314268)(501.05128353,219.95028115)(537.00504017,211.47997483)
\curveto(586.53505491,199.81125268)(612.04247945,145.68815249)(599.78531104,98.3895)
\curveto(590.55473928,62.77004928)(562.49287717,34.48398742)(527.98194956,22.58086182)
\moveto(139.87819606,99.76529779)
\curveto(110.60551524,97.34202056)(87.80829432,123.46874971)(86.60541544,151.3023715)
\curveto(84.90911321,190.55340136)(120.11646247,220.54254289)(157.7280043,221.11690252)
\curveto(208.12141186,221.8864519)(246.14100889,176.44484303)(245.87297704,127.87544245)
\curveto(245.55309543,69.91047155)(196.30227607,24.23197281)(139.99999909,19.99999773)
}
}
{
\pscustom[linestyle=none,fillstyle=solid,fillcolor=white]
{
\newpath
\moveto(10,270)
\lineto(10,10)
\lineto(270,10)
\lineto(270,270)
\lineto(10,270)
\closepath
\moveto(140,260)
\curveto(206.27417,260)(260,206.27417)(260,140)
\curveto(260,73.72583)(206.27417,20)(140,20)
\curveto(73.72583,20)(20,73.72583)(20,140)
\curveto(20,206.27417)(73.72583,260)(140,260)
\closepath
\moveto(370,270)
\lineto(370,10)
\lineto(630,10)
\lineto(630,270)
\lineto(370,270)
\closepath
\moveto(500,260)
\curveto(566.27417,260)(620,206.27417)(620,140)
\curveto(620,73.72583)(566.27417,20)(500,20)
\curveto(433.72583,20)(380,73.72583)(380,140)
\curveto(380,206.27417)(433.72583,260)(500,260)
\closepath
}
}
{
\pscustom[linewidth=1,linecolor=black]
{
\newpath
\moveto(260,139.99999738)
\curveto(260,73.7258274)(206.27416998,19.99999738)(140,19.99999738)
\curveto(73.72583002,19.99999738)(20,73.7258274)(20,139.99999738)
\curveto(20,206.27416736)(73.72583002,259.99999738)(140,259.99999738)
\curveto(206.27416998,259.99999738)(260,206.27416736)(260,139.99999738)
\closepath
\moveto(620,139.99999477)
\curveto(620,73.72582479)(566.27416998,19.99999477)(500,19.99999477)
\curveto(433.72583002,19.99999477)(380,73.72582479)(380,139.99999477)
\curveto(380,206.27416475)(433.72583002,259.99999477)(500,259.99999477)
\curveto(566.27416998,259.99999477)(620,206.27416475)(620,139.99999477)
\closepath
}
}
{
\pscustom[linewidth=1,linecolor=black,fillstyle=solid,fillcolor=white]
{
\newpath
\moveto(180,139.99999738)
\curveto(180,117.90860739)(162.09138999,99.99999738)(140,99.99999738)
\curveto(117.90861001,99.99999738)(100,117.90860739)(100,139.99999738)
\curveto(100,162.09138738)(117.90861001,179.99999738)(140,179.99999738)
\curveto(162.09138999,179.99999738)(180,162.09138738)(180,139.99999738)
\closepath
\moveto(540,139.99999477)
\curveto(540,117.90860477)(522.09138999,99.99999477)(500,99.99999477)
\curveto(477.90861001,99.99999477)(460,117.90860477)(460,139.99999477)
\curveto(460,162.09138476)(477.90861001,179.99999477)(500,179.99999477)
\curveto(522.09138999,179.99999477)(540,162.09138476)(540,139.99999477)
\closepath
}
}
{
\pscustom[linewidth=2,linecolor=black]
{
\newpath
\moveto(290,140)
\lineto(350,140)
}
}
{
\pscustom[linewidth=2,linecolor=black,fillstyle=solid,fillcolor=black]
{
\newpath
\moveto(330,140)
\lineto(322,132)
\lineto(350,140)
\lineto(322,148)
\lineto(330,140)
\closepath
}
}
{
\pscustom[linestyle=none,fillstyle=solid,fillcolor=black]
{
\newpath
\moveto(537,25)
\curveto(537,22.23857625)(534.76142375,20)(532,20)
\curveto(529.23857625,20)(527,22.23857625)(527,25)
\curveto(527,27.76142375)(529.23857625,30)(532,30)
\curveto(534.76142375,30)(537,27.76142375)(537,25)
\closepath
\moveto(145,100)
\curveto(145,97.23857625)(142.76142375,95)(140,95)
\curveto(137.23857625,95)(135,97.23857625)(135,100)
\curveto(135,102.76142375)(137.23857625,105)(140,105)
\curveto(142.76142375,105)(145,102.76142375)(145,100)
\closepath
\moveto(145,20)
\curveto(145,17.23857625)(142.76142375,15)(140,15)
\curveto(137.23857625,15)(135,17.23857625)(135,20)
\curveto(135,22.76142375)(137.23857625,25)(140,25)
\curveto(142.76142375,25)(145,22.76142375)(145,20)
\closepath
\moveto(505,100)
\curveto(505,97.23857625)(502.76142375,95)(500,95)
\curveto(497.23857625,95)(495,97.23857625)(495,100)
\curveto(495,102.76142375)(497.23857625,105)(500,105)
\curveto(502.76142375,105)(505,102.76142375)(505,100)
\closepath
\moveto(491,103)
\curveto(491,100.23857625)(488.76142375,98)(486,98)
\curveto(483.23857625,98)(481,100.23857625)(481,103)
\curveto(481,105.76142375)(483.23857625,108)(486,108)
\curveto(488.76142375,108)(491,105.76142375)(491,103)
\closepath
\moveto(505,20)
\curveto(505,17.23857625)(502.76142375,15)(500,15)
\curveto(497.23857625,15)(495,17.23857625)(495,20)
\curveto(495,22.76142375)(497.23857625,25)(500,25)
\curveto(502.76142375,25)(505,22.76142375)(505,20)
\closepath
}
}
{
\put(150,60){$A_K$}
\put(70,190){$A_S$}
}
\end{pspicture}
(b)
\psset{xunit=.5pt,yunit=.5pt,runit=.5pt}
\begin{pspicture}(640,280)
{
\pscustom[linestyle=none,fillstyle=solid,fillcolor=gray]
{
\newpath
\moveto(260,139.99999738)
\curveto(260,73.7258274)(206.27416998,19.99999738)(140,19.99999738)
\curveto(73.72583002,19.99999738)(20,73.7258274)(20,139.99999738)
\curveto(20,206.27416736)(73.72583002,259.99999738)(140,259.99999738)
\curveto(206.27416998,259.99999738)(260,206.27416736)(260,139.99999738)
\closepath
\moveto(620,139.99999477)
\curveto(620,73.72582479)(566.27416998,19.99999477)(500,19.99999477)
\curveto(433.72583002,19.99999477)(380,73.72582479)(380,139.99999477)
\curveto(380,206.27416475)(433.72583002,259.99999477)(500,259.99999477)
\curveto(566.27416998,259.99999477)(620,206.27416475)(620,139.99999477)
\closepath
}
}
{
\pscustom[linewidth=4,linecolor=black]
{
\newpath
\moveto(510.97794505,110.25426161)
\curveto(495.86579281,98.57074525)(474.45214036,106.27529803)(464.48685816,120.84973268)
\curveto(448.10902929,144.80265162)(460.29358332,176.89623493)(483.38010001,191.32480448)
\curveto(518.51598758,213.2839681)(563.94650624,195.36316792)(583.73461246,161.44377494)
\curveto(612.20056797,112.6494173)(587.20874259,51.07820321)(540.00000319,24.99999605)
\curveto(535.24444318,22.37301467)(530.30358075,20.08271863)(525.22896532,18.14283329)
\moveto(171.0975681,119.53234076)
\curveto(160.00815698,98.39541758)(131.97097916,94.79310677)(112.6385001,106.21278087)
\curveto(84.21612547,123.00184628)(79.82408412,161.6704012)(96.73198975,188.17335965)
\curveto(120.0683177,224.75280028)(170.84473134,229.97681756)(205.39708653,206.70729624)
\curveto(250.96876767,176.01673405)(257.06613622,111.73456899)(226.60987263,68.29329673)
\curveto(200.24933332,30.69395819)(152.52303749,13.46142106)(107.86922394,21.77647602)
}
}
{
\pscustom[linewidth=1.5,linecolor=gray]
{
\newpath
\moveto(161.52393849,108.10717774)
\curveto(141.00259469,91.96519312)(111.35320819,100.571345)(97.0048419,120.61836541)
\curveto(76.29822842,149.54889969)(88.52127689,189.64036811)(116.74578695,208.53036557)
\curveto(155.1941793,234.26296578)(207.08028836,217.88725908)(230.96979253,180.35260597)
\curveto(262.17280869,131.32712686)(241.13717569,66.36182773)(193.2162371,37.03140648)
\curveto(169.12200192,22.28432331)(140.03950071,16.85302824)(112.17128584,21.05209787)
\moveto(510.97794505,110.25426161)
\curveto(495.86579281,98.57074525)(474.45214036,106.27529803)(464.48685816,120.84973268)
\curveto(448.10902929,144.80265162)(460.29358332,176.89623493)(483.38010001,191.32480448)
\curveto(518.51598758,213.2839681)(563.94650624,195.36316792)(583.73461246,161.44377494)
\curveto(612.20056797,112.6494173)(587.20874259,51.07820321)(540.00000319,24.99999605)
\curveto(535.24444318,22.37301467)(530.30358075,20.08271863)(525.22896532,18.14283329)
}
}
{
\pscustom[linewidth=3,linecolor=black]
{
\newpath
\moveto(125,20)
\lineto(125,105)
\lineto(160,110)
\lineto(160,20)
\moveto(485,20)
\lineto(485,105)
\lineto(520,110)
\lineto(520,20)
}
}
{
\pscustom[linestyle=none,fillstyle=solid,fillcolor=white]
{
\newpath
\moveto(10,270)
\lineto(10,10)
\lineto(270,10)
\lineto(270,270)
\lineto(10,270)
\closepath
\moveto(140,260)
\curveto(206.27417,260)(260,206.27417)(260,140)
\curveto(260,73.72583)(206.27417,20)(140,20)
\curveto(73.72583,20)(20,73.72583)(20,140)
\curveto(20,206.27417)(73.72583,260)(140,260)
\closepath
\moveto(370,270)
\lineto(370,10)
\lineto(630,10)
\lineto(630,270)
\lineto(370,270)
\closepath
\moveto(500,260)
\curveto(566.27417,260)(620,206.27417)(620,140)
\curveto(620,73.72583)(566.27417,20)(500,20)
\curveto(433.72583,20)(380,73.72583)(380,140)
\curveto(380,206.27417)(433.72583,260)(500,260)
\closepath
}
}
{
\pscustom[linewidth=1,linecolor=black]
{
\newpath
\moveto(260,139.99999738)
\curveto(260,73.7258274)(206.27416998,19.99999738)(140,19.99999738)
\curveto(73.72583002,19.99999738)(20,73.7258274)(20,139.99999738)
\curveto(20,206.27416736)(73.72583002,259.99999738)(140,259.99999738)
\curveto(206.27416998,259.99999738)(260,206.27416736)(260,139.99999738)
\closepath
\moveto(620,139.99999477)
\curveto(620,73.72582479)(566.27416998,19.99999477)(500,19.99999477)
\curveto(433.72583002,19.99999477)(380,73.72582479)(380,139.99999477)
\curveto(380,206.27416475)(433.72583002,259.99999477)(500,259.99999477)
\curveto(566.27416998,259.99999477)(620,206.27416475)(620,139.99999477)
\closepath
}
}
{
\pscustom[linewidth=1,linecolor=black,fillstyle=solid,fillcolor=white]
{
\newpath
\moveto(180,139.99999738)
\curveto(180,117.90860739)(162.09138999,99.99999738)(140,99.99999738)
\curveto(117.90861001,99.99999738)(100,117.90860739)(100,139.99999738)
\curveto(100,162.09138738)(117.90861001,179.99999738)(140,179.99999738)
\curveto(162.09138999,179.99999738)(180,162.09138738)(180,139.99999738)
\closepath
\moveto(540,139.99999477)
\curveto(540,117.90860477)(522.09138999,99.99999477)(500,99.99999477)
\curveto(477.90861001,99.99999477)(460,117.90860477)(460,139.99999477)
\curveto(460,162.09138476)(477.90861001,179.99999477)(500,179.99999477)
\curveto(522.09138999,179.99999477)(540,162.09138476)(540,139.99999477)
\closepath
}
}
{
\pscustom[linewidth=2,linecolor=black]
{
\newpath
\moveto(290,140)
\lineto(350,140)
}
}
{
\pscustom[linewidth=2,linecolor=black,fillstyle=solid,fillcolor=black]
{
\newpath
\moveto(330,140)
\lineto(322,132)
\lineto(350,140)
\lineto(322,148)
\lineto(330,140)
\closepath
}
}
{
\pscustom[linestyle=none,fillstyle=solid,fillcolor=black]
{
\newpath
\moveto(129,105)
\curveto(129,102.790861)(127.209139,101)(125,101)
\curveto(122.790861,101)(121,102.790861)(121,105)
\curveto(121,107.209139)(122.790861,109)(125,109)
\curveto(127.209139,109)(129,107.209139)(129,105)
\closepath
\moveto(164,105)
\curveto(164,102.790861)(162.209139,101)(160,101)
\curveto(157.790861,101)(156,102.790861)(156,105)
\curveto(156,107.209139)(157.790861,109)(160,109)
\curveto(162.209139,109)(164,107.209139)(164,105)
\closepath
\moveto(129,21)
\curveto(129,18.790861)(127.209139,17)(125,17)
\curveto(122.790861,17)(121,18.790861)(121,21)
\curveto(121,23.209139)(122.790861,25)(125,25)
\curveto(127.209139,25)(129,23.209139)(129,21)
\closepath
\moveto(164,20)
\curveto(164,17.790861)(162.209139,16)(160,16)
\curveto(157.790861,16)(156,17.790861)(156,20)
\curveto(156,22.209139)(157.790861,24)(160,24)
\curveto(162.209139,24)(164,22.209139)(164,20)
\closepath
\moveto(489,103)
\curveto(489,100.790861)(487.209139,99)(485,99)
\curveto(482.790861,99)(481,100.790861)(481,103)
\curveto(481,105.209139)(482.790861,107)(485,107)
\curveto(487.209139,107)(489,105.209139)(489,103)
\closepath
\moveto(477,110)
\curveto(477,107.790861)(475.209139,106)(473,106)
\curveto(470.790861,106)(469,107.790861)(469,110)
\curveto(469,112.209139)(470.790861,114)(473,114)
\curveto(475.209139,114)(477,112.209139)(477,110)
\closepath
\moveto(466,134)
\curveto(466,131.790861)(464.209139,130)(462,130)
\curveto(459.790861,130)(458,131.790861)(458,134)
\curveto(458,136.209139)(459.790861,138)(462,138)
\curveto(464.209139,138)(466,136.209139)(466,134)
\closepath
\moveto(524,105)
\curveto(524,102.790861)(522.209139,101)(520,101)
\curveto(517.790861,101)(516,102.790861)(516,105)
\curveto(516,107.209139)(517.790861,109)(520,109)
\curveto(522.209139,109)(524,107.209139)(524,105)
\closepath
\moveto(489,21)
\curveto(489,18.790861)(487.209139,17)(485,17)
\curveto(482.790861,17)(481,18.790861)(481,21)
\curveto(481,23.209139)(482.790861,25)(485,25)
\curveto(487.209139,25)(489,23.209139)(489,21)
\closepath
\moveto(524,22)
\curveto(524,19.790861)(522.209139,18)(520,18)
\curveto(517.790861,18)(516,19.790861)(516,22)
\curveto(516,24.209139)(517.790861,26)(520,26)
\curveto(522.209139,26)(524,24.209139)(524,22)
\closepath
\moveto(546,28)
\curveto(546,25.790861)(544.209139,24)(542,24)
\curveto(539.790861,24)(538,25.790861)(538,28)
\curveto(538,30.209139)(539.790861,32)(542,32)
\curveto(544.209139,32)(546,30.209139)(546,28)
\closepath
\moveto(562,33)
\curveto(562,30.790861)(560.209139,29)(558,29)
\curveto(555.790861,29)(554,30.790861)(554,33)
\curveto(554,35.209139)(555.790861,37)(558,37)
\curveto(560.209139,37)(562,35.209139)(562,33)
\closepath
}
}
\end{pspicture}
\caption{\label{pic2}}
\end{figure}
When there are instead multiple components to consider, we treat all the parallel copies as a product neighbourhood of a single copy, and isotope this neighbourhood as we just described for one annulus (see Figure \ref{pic2}b).

Similarly, in each $E'_j\setminus W'$, we will treat all parallel copies of a connected component as a product neighbourhood of one component. Again, therefore, picture the case where $R_K\cap(E'_k\setminus W')$ has a single component (the same will then be true of $S\cap(E'_k\setminus W')$). Denote the component of $R_K$ by $B_K$ and the component of $S$ by $B_S$. 

Initially $B_K$ and $B_S$ coincide. We have already defined our isotopy on $\partial(E'_k\setminus W')$; we wish to extend this isotopy to $E'_k\setminus W'$ so that $B_S$ becomes transverse to $B_K$. 
Each boundary component has a pre-defined direction that it needs to be moved. 
Since $B_S$ is connected, a suitable isotopy of the surface $B_S$ exists after which $|B_K\cap B_S|\leq 1$, with $B_K$ and $B_S$ disjoint if possible. 

We can see more explicitly how the isotopy is chosen as follows. Each boundary component of $B_S$ needs to be moved either in the direction given by the orientation on $B_K$ or in the opposite direction. Mark a boundary component with a $+$ if the direction it needs to move agrees with the orientation of $B_K$, and with a $-$ otherwise. These signs can also be determined using the coefficients $s_i$. Recall that $B_S$ lies outside $T'_k$. If $s_k>0$ then the boundary component of $B_S$ on $\partial W'_k$ has a $-$ sign, and if $s_k<0$ then it has a $+$ sign. If $B_S$ has a boundary component on $\partial W'_j$ for some $j\neq k$ then $B_S$ lies inside $T'_j$. If $s_j>0$ then this boundary component has a $+$, and if $s_j<0$ then it has a $-$. If all boundary components of $B_S$ have the same sign, we may isotope $B_S$ to be disjoint from $B_K$ in a way that behaves as required on the boundary. Otherwise, choose a single simple closed curve on $B_S$ that separates all boundary components with a $+$ from all boundary components with a $-$. In this case we can choose a suitable isotopy that leaves this curve as the intersection between $B_K$ and $B_S$. The section of $B_S$ on the $+$ side of the curve is isotoped to the positive side of $B_K$, while the section on the $-$ side is isotoped to the negative side of $B_K$.

There is one case not included in this description. If $k=0$ then $B_S$ has one boundary component on $T_0$. There is no pre-determined position we must isotope this boundary component to. To avoid creating product regions, if all other boundary components have the same sign then we must also assign that sign to this boundary component. Otherwise, we may freely assign it either a $+$ or a $-$.

We have now made $R_K$ and $S$ transverse by an isotopy of $S$ in $E$. To apply Proposition \ref{prop:productregions}, we must verify that our choice of isotopy was a good one, that there are now no product regions bounded by $R_K$ and $S$. We must therefore check each of the complementary regions of $E\setminus(R_K\cup S)$ to see if it is a product region. 

The first thing to note is that any complementary region that lies between two parallel copies of a section of $R_K$ will meet $R_K$ on both its positive and negative sides, since we have chosen $R_K$ such that all such sections of surface are oriented in the same direction. Therefore, these complementary regions cannot be product regions. The same holds for parallel sections of $S$. See Figure \ref{pic3}a; the shaded regions pick out one complementary region between parallel sections of $R_K$, one between parallel sections of $S$, and one coming from the intersection of parallel regions that therefore lies both between parallel sections of $R_K$ and between parallel sections of $S$.
\begin{figure}[htb]
\centering
(a)
\psset{xunit=.5pt,yunit=.5pt,runit=.5pt}
\begin{pspicture}(270,330)
{
\pscustom[linestyle=none,fillstyle=solid,fillcolor=gray]
{
\newpath
\moveto(180,200.00000262)
\lineto(150,200.00000262)
\lineto(150,170.00000262)
\lineto(180,170.00000262)
\closepath
\moveto(20,170.00000262)
\lineto(20,140.00000262)
\lineto(120,140.00000262)
\lineto(120,170.00000262)
\closepath
\moveto(150,310.00000262)
\lineto(120,310.00000262)
\lineto(120,200.00000262)
\lineto(150,200.00000262)
\closepath
}
}
{
\pscustom[linewidth=4,linecolor=black]
{
\newpath
\moveto(150,120.00000262)
\lineto(150,310.00000262)
\moveto(120,120.00000262)
\lineto(120,310.00000262)
\moveto(180,120.00000262)
\lineto(180,310.00000262)
}
}
{
\pscustom[linewidth=2,linecolor=black]
{
\newpath
\moveto(210,169.99998262)
\lineto(20,169.99998262)
\moveto(210,139.99998262)
\lineto(20,139.99998262)
\moveto(210,199.99998262)
\lineto(20,199.99998262)
}
}
{
\pscustom[linewidth=1,linecolor=black]
{
\newpath
\moveto(180,300.00000262)
\lineto(200,300.00000262)
\moveto(150,300.00000262)
\lineto(170,300.00000262)
\moveto(120,300.00000262)
\lineto(140,300.00000262)
\moveto(30,199.99998262)
\lineto(30,219.99998262)
\moveto(30,169.99998262)
\lineto(30,189.99998262)
\moveto(30,139.99998262)
\lineto(30,159.99998262)
}
}
{
\pscustom[linewidth=1,linecolor=black,fillstyle=solid,fillcolor=black]
{
\newpath
\moveto(190,300.00000262)
\lineto(186,296.00000262)
\lineto(200,300.00000262)
\lineto(186,304.00000262)
\lineto(190,300.00000262)
\closepath
\moveto(160,300.00000262)
\lineto(156,296.00000262)
\lineto(170,300.00000262)
\lineto(156,304.00000262)
\lineto(160,300.00000262)
\closepath
\moveto(130,300.00000262)
\lineto(126,296.00000262)
\lineto(140,300.00000262)
\lineto(126,304.00000262)
\lineto(130,300.00000262)
\closepath
\moveto(30,209.99998262)
\lineto(34,205.99998262)
\lineto(30,219.99998262)
\lineto(26,205.99998262)
\lineto(30,209.99998262)
\closepath
\moveto(30,179.99998262)
\lineto(34,175.99998262)
\lineto(30,189.99998262)
\lineto(26,175.99998262)
\lineto(30,179.99998262)
\closepath
\moveto(30,149.99998262)
\lineto(34,145.99998262)
\lineto(30,159.99998262)
\lineto(26,145.99998262)
\lineto(30,149.99998262)
\closepath
}
}
{
\put(220,160){$S$}
\put(130,100){$R_K$}
}
\end{pspicture}
(b)
\input{pic3b.tex}
\caption{\label{pic3}}
\end{figure}

Hence once more we can imagine that each part of each of the surfaces $R_K$ and $S$ has a single component rather than multiple parallel copies of a component.
Observe that, under this assumption, our choice of sign for the boundary component of $S$ on $T_0$ ensures that every complementary region meets $T'_j$ for some $j\in\{1,\ldots,N'\}$.

Next we turn our attention to the complementary regions that are contained entirely within $ W'$. This is depicted in Figure \ref{pic3}b.
Again we find that each such complementary region (such as that marked $M_1$ in Figure \ref{pic3}b) meets $S$ on both the positive and negative sides. This is also true of any complementary region that intersects $E'_j\cap\partial  W'$ in a `small' sub-annulus of $E'_j\cap\partial  W'$ across which $S$ was isotoped (such as that marked $A_1$ in Figure \ref{pic3}b). 

The final case to consider is a complementary region that intersects $E'_j\cap\partial W'$ in a `larger' sub-annulus coming from a component of $(E'_j\cap\partial W')\setminus R_K$ (such as that marked $A_2$ in Figure \ref{pic3}b). There are two possibilities. If $R_K$ and $S$ intersect in $E'_j\setminus W'$ (that is, if there are boundary components of $S\cap E'_j$ that were marked with different signs) then once again the complementary region meets both $R_K$ and $S$ each on the positive side and on the negative side. The other possibility is that $S\cap E'_j$ and $R_K\cap E'_j$ are disjoint and parallel in $E'_j\setminus  W'$. This time we cannot necessarily use the boundary pattern to rule out the possibility that the complementary region of interest is a product region. However, the fact that $R_K\cap E'_j$ is not a fibre for $E'_j$ tells us this instead.

As $R_K$ and $S$ do not bound a product region, Proposition \ref{prop:productregions} allows us to use $S$ to calculate the distance between $R_K$ and $S$ in $\ms(K)$ without any further isotopy. Choose $k\in\{1,\ldots,N'\}$ such that $s_k=\max(|s_1|,\ldots,|s_{N'}|)$.
An annulus of $S\cap W'_k$ contains at least $5|s_k|-1$ curves of intersection with $R_K$, each a core curve of the annulus and all oriented in the same direction. Therefore, in the cover $\widetilde{E}$ of $E$ constructed using $R_K$, a lift of $S$ intersects at least $5|s_k|$ lifts of $E\setminus R_K$. Hence $d_{\ms(K)}(R_K,S)\geq 5|s_k|=5\max(|s_1|,\ldots,|s_{N'}|)=5\max(|s_1-r_1|,\ldots,|s_{N'}-r_{N'}|)$.

We can also find an upper bound on $d_{\ms(K)}(R_K,S)$, since this is at most $|R_K\cap S|+1$. Set $M=\max (|R_K\cap T'_1|,\ldots,|R_K\cap T'_{N'}|)$. Then
\[
|R_K\cap S|\leq M^2(5\max(|s_1|,\ldots,|s_{N'}|)-1)+(N'+1),
\]
so
\[
\dist_{\ms(K)}(R_K,S)\leq 5M^2\max(|s_1-r_1|,\ldots,|s_{N'}-r_{N'}|) +(N'+2).
\]
These two inequalities together show that $\Theta$ is a quasi-isometric embedding.
\end{proof}

\begin{corollary}
The upper bound on dimension given in \cite{zbMATH06369090} is also a lower bound.
That is, for a knot $K$ in $\Sphere$, the Kakimizu complex $\ms(K)$ of $K$ is quasi-isometric to $\mathbb{Z}^M$, where $M$ is equal to one less than the number of core JSJ blocks that are not fibred.
\end{corollary}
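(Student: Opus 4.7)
The plan is to combine the quasi-isometric embedding $\Theta\colon\mathbb{Z}^{N'}\to\ms(K)$ produced by the preceding proposition with the results of \cite{zbMATH06369090}. Johnson, Pelayo and Wilson have already shown that $\ms(K)$ is quasi-isometric to $\mathbb{Z}^n$ for some $n\in\mathbb{N}\cup\{0\}$, and that $n\leq N'$; all that remains is to deduce the matching lower bound $n\geq N'$, and the equality $n=N'$ yields the corollary.

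To obtain this, I would compose $\Theta$ with any quasi-isometry $\ms(K)\to\mathbb{Z}^n$ to produce a quasi-isometric embedding $\mathbb{Z}^{N'}\to\mathbb{Z}^n$. The standard invariance of Euclidean dimension under quasi-isometric embeddings then forces $N'\leq n$. The cleanest way to see this invariance is via a volume-growth comparison: an $r$-ball in $\mathbb{Z}^m$ has of order $r^m$ points, while a quasi-isometric embedding sends $r$-balls into balls of radius $Cr+D$ with uniformly bounded fibres, so a quasi-isometric embedding $\mathbb{Z}^{N'}\to\mathbb{Z}^n$ gives $r^{N'}\lesssim(Cr+D)^n$ for all $r$, which is only possible if $N'\leq n$. (Equivalently, one can invoke monotonicity of asymptotic dimension under quasi-isometric embeddings together with $\mathrm{asdim}\,\mathbb{Z}^m=m$.)

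Combining $n\leq N'$ from \cite{zbMATH06369090} with the inequality $n\geq N'$ just derived forces $n=N'$, so $\ms(K)$ is quasi-isometric to $\mathbb{Z}^{N'}$. The main obstacle, insofar as there is one, is really already absorbed into the proposition: once the quasi-isometric embedding of $\mathbb{Z}^{N'}$ has been constructed, no further information about Seifert surfaces, the JSJ decomposition, or product regions is needed, and the passage to the corollary is essentially formal.
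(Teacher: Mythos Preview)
Your proposal is correct and follows exactly the route the paper intends: the paper states the corollary immediately after the proposition without any separate proof, treating it as a formal consequence of the quasi-isometric embedding $\Theta$ together with the upper bound from \cite{zbMATH06369090}. Your growth-rate (or asymptotic-dimension) argument is precisely the standard way to make that formal step explicit, and nothing further about Seifert surfaces or the JSJ decomposition is needed.
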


\subsection*{A word on links}

The results in this paper, like those in \cite{zbMATH06369090}, are specifically stated for knots, rather than links in general. 
The definition of the Kakimizu complex and the metric on it can be extended to links. However, the definitions should be stated in a different form before generalising. For more details on this see \cite{MR2869183} and \cite{MR3091274}.

The reason for the restriction to knots comes in Theorem 7 of \cite{zbMATH06369090}, which shows that there are only finitely many subsurfaces in each block that are relevant for the main proof.  
This is proved using the classification of Seifert fibred submanifolds of \(\Sphere\) given by Budney in \cite{zbMATH05141213}. Although Budney's result applies equally well for multi-component link complements as for knot complements, the same is not true of \cite{zbMATH06369090} Theorem 7.

As a counter-example, consider the \((9,6)\) torus link (that is, three parallel copies of a trefoil), with all components oriented in parallel. The complement of this link is Seifert fibred over a punctured sphere with two exceptional fibres. Since there is therefore only one block in the link complement, we would want to conclude that the Kakimizu complex is quasi-isomorphic to a point. On the other hand, there is an essential torus in the link complement separating two of the link components from the third, which can be used for spinning around.
In calculating the dimension of the Kakimizu complex of a link complement, it is thus important to allow for the presence of toroidal Seifert fibred pieces in the JSJ decomposition.


\bibliography{quasirefs}
\bibliographystyle{hplain}


\bigskip

\noindent
University of Hull

\noindent
Hull, HU6 7RX

\noindent
UK

\smallskip
\noindent
\textit{jessica.banks[at]lmh.oxon.org/j.banks[at]hull.ac.uk}

\end{document}